\title{Affine Nash groups over real closed fields}
\date{April 28, 2011}
\author{Ehud Hrushovski\thanks{Supported by ISF}\\Hebrew University of Jerusalem \and Anand Pillay\thanks{Supported by EPSRC grant EP/I002294/1}\\University of Leeds}
\newtheorem{Theorem}{Theorem}[section]
\newtheorem{Proposition}[Theorem]{Proposition}
\newtheorem{Definition}[Theorem]{Definition} 
\newtheorem{Remark}[Theorem]{Remark}
\newtheorem{Lemma}[Theorem]{Lemma}
\newtheorem{Corollary}[Theorem]{Corollary}
\newcommand{\R}{\mathbb R}
\newcommand{\C}{\mathbb C}
\begin{document}
\maketitle

\begin{abstract} 
We prove that a semialgebraically connected affine Nash group over a real closed field $R$ is Nash isogenous to the semialgebraically connected component of the group $H(R)$ of $R$-points of some algebraic group $H$ defined over $R$. In the case when $R = \R$ this result was claimed in \cite{HP}, but a mistake in the proof was recently found, and the new proof we obtained has the advantage of being valid over an arbitrary real closed field.
We also extend the result to not necessarily connected affine Nash groups over arbitrary real closed fields.
\end{abstract}

\section{Introduction and preliminaries}
The Nash category lies in between the real algebraic and real analytic categories. Nash functions are by definition both semialgebraic and analytic. In so far as Nash manifolds are concerned, the relevant transition functions should be Nash, and one also requires a finite covering by charts which are open semialgebraic subsets of $\R^{n}$.  A Nash manifold is said to be {\em affine} if it is Nash embeddable in some $\R^{n}$. If $X$ is the set of real points of some nonsingular quasiprojective algebraic variety defined over $\R$ then $X$ is an affine Nash manifold. Conversely the {\em algebraicity theorem} (see Proposition 8.4.6 of \cite{BCR}) says that any affine Nash manifold is Nash isomorphic to a connected component of a nonsingular real algebraic variety. There is a considerable literature on affine Nash manifolds but less on (abstract) Nash manifolds, although the latter were also considered in the pioneering paper of Artin and Mazur \cite{AM} and later in Shiota's comprehensive monograph \cite{Shiota}. 

A Nash group is a Nash manifold with Nash group structure. In \cite{HP} we purported to prove an ``equivariant" version of the algebraicity theorem by showing that a connected affine Nash group is Nash isogeneous to the connected component of a real algebraic group. Recently a mistake was pointed out by Elias Baro. We are not sure whether the proof in \cite{HP} can be salvaged, but we will give here a somewhat different and more direct proof, which also works over arbitrary real closed fields. This is done in section 2. In section 3 we give a version for not necessarily connected (affine) Nash groups: if $G$ is a Nash group whose connected component is affine then $G$ is Nash isogenous with a subgroup of finite index in a real algebraic group. We also carry this out at the level of real closed fields.

We would first like to thank Elias Baro who pointed out the precise mistake in \cite{HP} as well as suggesting that our new proof should work over any real closed field. In this connection we should also mention Margarita Otero who also had pointed out some gaps in the proof in \cite{HP} which until recently we thought we could fill easily. Secondly, thanks to Sun Binyong,  for asking us about the generalization to the case of non connected affine Nash groups and for his commentary on the proof described to him by the first author.

We now give some definitions and facts, with \cite{BCR} as a basic reference. We will work over an arbitrary real closed field $R$, a special case being when $R = \R$.  
 If $U$ is an open semialgebraic subset of $R$ then by a Nash function $f:U \to R$ we mean a function which is semialgebraic and infinitely differentiable, in the obvious sense. When $R = \R$ this amounts to $f$ being (real) analytic and satisfying a polynomial equation 
$P({\bar x},f({\bar x})) = 0$ on $U$. The definition of an abstract Nash manifold over $R$, of Nash maps between such Nash manifolds, and hence of a Nash group over $\R$ is unproblematic. But there does not seem to be a systematic treatment of ``differential geometric" properties of such abstract Nash manifolds when $R$ is not the reals. So we will take as our {\em definition} of an ``affine Nash manifold" that of a 
{\em $d$-dimensional Nash submanifold $M$ of $R^{n}$} from 
\cite{BCR}  (Definition 2.9.9). $M$ should be a semialgebraic subset of $R^{n}$ with the following property: for every $a\in M$ there is an open semialgebraic neighbourhood $U$ of $0$ in $R^{n}$ and and open semialgebraic neighbourhood $V$ of $a$ in $R^{n}$, and a Nash diffeomorphism 
$\phi$ between $U$ and $V$ such that $\phi((R^{d}\times \{0\})\cap U) = M \cap V$. 

A Nash function or mapping $f$ from $M$ to $R$ is by definition a semialgebraic function such that for every $\phi$ as above the map $f\circ\phi$ restricted to $R^{d}\times\{0\})\cap U$ is Nash  (considered as a mapping from a semialgebraic open subset of $R^{d}$ to $R$). Note that in particular the coordinate functions on $M$ are Nash. 
We deduce easily the notion of a Nash mapping from $M$ to $N$ where $M, N$ are affine Nash manifolds.  

A Nash submanifold $M$ of $R^{n}$ has a topology induced from $R^{n}$ and we call it {\em semialgebraically connected} if we cannot write $M$ as the disjoint union of two nonempty open semialgebraic subsets. In general an affine Nash manifold is the disjoint union of finitely many definably connected components, each of which is also an affine Nash manifold.

By an {\em affine Nash group $G$} we mean an affine Nash manifold with a group structure such that the multiplication and inversion maps, from $G\times G \to G$ and $G\to G$ respectively are Nash maps. If $G$ is an affine Nash group then $G^{0}$ denotes the semialgebraically connected component of the identity, also an affine Nash group. 

The notions of a {\em real algebraic variety} and {\em affine real algebraic variety} over $R$ as well as regular maps between them are discussed in detail in section 3 of \cite{BCR}. We will call these $R$-algebraic varieties and affine $R$-algebraic varieties. An $R$-algebraic group is an $R$-algebraic variety with group structure (product, inversion) given by regular maps.
A key difference with usual algebraic geometry (over an algebraically closed field) is that projective $n$-space over $R$, ${\mathbb P}_{n}(R)$, is (biregularly isomorphic to) an affine $R$-algebraic variety (Theorem 3.4.4 of \cite{BCR}). A consequence is that if $X$ is a quasiprojective algebraic variety over $R$ (or defined over $R$) in the usual sense, then the set $X(R)$ of $R$-points of $X$ is (naturally) an affine $R$-algebraic variety. On the face of it $X(R)$ need not be Zariski dense in $X$, but replacing $X$ by the Zariski closure of $X(R)$ in $X$, we can always assume Zariski-density of $X(R)$. As algebraic groups are quasiprojective  it follows that if $G$ is an algebraic group defined over $R$ then $G(R)$ is an affine $R$-algebraic group, in particular an affine Nash group over $R$. 

If $H$ is an $\R$-algebraic group then $H^{0}$ as well as finite covers of $H$, will be affine Nash groups but not necessarily $\R$-algebraic groups. Likewise over any real closed field $R$. So the most one can expect to prove is that an affine Nash group $G$ is  Nash isogenous to a union of semialgebraic connected components of an $R$-algebraic group (which is what we prove). Moreover it suffices to prove that there is a Nash homomorphism $f$ with finite kernel from $G$ into some $R$-algebraic group $H$, because then $f(G)$ will have finite index in its Zariski closure (an $R$-algebraic
subgroup of $H$). 

Although the role of model theoretic ideas in this paper is somewhat suppressed, we say a few words. A {\em semialgebraic} subset $X$ of $R^{n}$ is the same thing as a set which is (first order) definable (with parameters) in the structure
$(R,+,\cdot, <)$  (because of quantifier elimination). By a semialgebraic function $f$ between semialgebraic sets we mean simply a function whose graph is semialgebraic, so we make no continuity assumption. (But any semialgebraic function is ``piecewise Nash".) By a semialgebraic group we mean a semialgebraic set with semialgebraic group operation. 
Nash groups (affine or otherwise) are semialgebraic groups and moreover any semialgebraic homomorphism between Nash groups is Nash. A result in \cite{Pillay} (together with the above-mentioned piecewise Nashness of semialgebraic functions) implies that any semialgebraic group is semialgebraically isomorphic (as a group)  to a (not necessarily affine) Nash group. However there are many semialgebraic groups which are not semialgebraically isomorphic to {\em affine} Nash groups: Fix $a>0$ in $R$ then the semialgebraic group with universe the interval $[0,a)$ and with group operation addition modulo $a$ is a well-known such example. A rather new kind of example appears in \cite{CPI}, see 2.10 there. It seems not too unreasonable at the current time to aim towards a fairly explicit description of all semialgebraic groups over real closed fields (up to semialgebraic isomorphism), starting from the $R$-algebraic groups and iterating some basic constructions.

\vspace{2mm}
\noindent
We will be making use of the notion of the {\em dimension} of a semialgebraic set $X\subseteq R^{n}$. See section 2.8 of \cite{BCR} or any model theory text such as \cite{vdD}.

\begin{Definition} Let $M$ be an affine Nash manifold. By a Nash subset of $M$ we mean the common zero set of finitely many Nash functions $f:M\to R$. 
\end{Definition}

So a Nash subset of $M$ is a special case of a semialgebraic subset of $M$.

\vspace{2mm}
\noindent
An important fact for us will be:
\begin{Lemma} Let $M$ be an an affine Nash manifold. Then we have the descending chain condition on Nash subsets of $M$: there is no infinite strictly descending chain $X_{1}\supset X_{2} \supset ...$ of Nash subsets of $M$.
\end{Lemma}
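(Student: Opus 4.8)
The plan is to deduce the descending chain condition from the \emph{Noetherianity} of the ring $\mathcal{N}(M)$ of Nash functions on $M$, and then to establish that Noetherianity via the algebraicity theorem. First I would set up the correspondence between Nash subsets and ideals. Since sums and products of Nash functions are Nash, the Nash functions $f\colon M\to R$ form a commutative ring $\mathcal{N}(M)$ (containing, in particular, the coordinate functions). To a Nash subset $X\subseteq M$ associate the ideal $I(X)=\{f\in\mathcal{N}(M):f|_{X}=0\}$, and for a set $S$ of Nash functions write $Z(S)$ for its common zero set. The crucial observation---needing no Nullstellensatz---is that $Z(I(X))=X$ for every Nash subset $X$: by definition $X=Z(\{f_{1},\dots,f_{k}\})$ for some $f_{i}\in\mathcal{N}(M)$, and since each $f_{i}\in I(X)$ we have $Z(I(X))\subseteq Z(\{f_{1},\dots,f_{k}\})=X$, while $X\subseteq Z(I(X))$ is immediate. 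Hence $X\mapsto I(X)$ is an inclusion-reversing \emph{injection} from Nash subsets into ideals of $\mathcal{N}(M)$, because $I(X)=I(Y)$ forces $X=Z(I(X))=Z(I(Y))=Y$. Consequently a strictly descending chain $X_{1}\supset X_{2}\supset\cdots$ of Nash subsets yields a strictly ascending chain $I(X_{1})\subset I(X_{2})\subset\cdots$ of ideals, so the Lemma follows as soon as $\mathcal{N}(M)$ is shown to be Noetherian.

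To prove Noetherianity I would reduce to an algebraic model. As $M$ is a finite disjoint union of its semialgebraically connected components and $\mathcal{N}(M)$ is correspondingly a finite product of rings, it suffices to treat $M$ semialgebraically connected; here the identity principle for Nash functions (a Nash function vanishing on a nonempty open semialgebraic subset of a connected $M$ vanishes identically, so every proper Nash subset has dimension $<\dim M$) is available. By the algebraicity theorem (Proposition 8.4.6 of \cite{BCR}) I may then assume that $M$ is a union of semialgebraically connected components of $V(R)$ for some nonsingular affine $R$-algebraic variety $V$. This reduces the whole statement to the assertion that the ring of Nash functions on such a union of components is Noetherian.

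This last assertion is the heart of the matter and the step I expect to be the main obstacle, particularly over an arbitrary real closed field. Over $R=\R$ it is classical, and I would aim to recover it for general $R$ from the theory of Nash functions on nonsingular affine varieties over real closed fields (local description by algebraic power series together with a global finiteness/coherence input), which is precisely the place where working over $R$ rather than $\R$ demands care. I stress that the naive alternative of inducting on $\dim X$ does not suffice by itself: the identity principle controls dimension \emph{drops}, but a descending chain may sit at a fixed top dimension indefinitely while repeatedly discarding top-dimensional pieces, and ruling this out is exactly the finiteness of the Nash-irreducible decomposition, i.e.\ Noetherianity. For this reason I route the proof through $\mathcal{N}(M)$ rather than through dimension alone.
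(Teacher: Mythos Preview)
Your reduction in the first paragraph is sound: the map $X\mapsto I(X)$ is indeed an order-reversing injection from Nash subsets into ideals of $\mathcal{N}(M)$, so DCC on Nash subsets follows once one has a suitable finiteness property for $\mathcal{N}(M)$. Where you diverge from the paper is in \emph{which} finiteness property you invoke. You go for full Noetherianity of $\mathcal{N}(M)$, then try to manufacture it via the algebraicity theorem and the theory of Nash functions on nonsingular varieties---a route you yourself flag as the main obstacle over a general real closed field, and which you do not complete. The paper instead simply cites Proposition 8.6.2 of \cite{BCR}: for any ideal $I$ of $\mathcal{N}(M)$, the zero set $Z(I)$ equals $Z(f_{1},\dots,f_{p})$ for finitely many $f_{j}\in I$. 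This is strictly weaker than Noetherianity (distinct ideals may share a zero set) but is exactly what your chain argument needs: given $X_{1}\supsetneq X_{2}\supsetneq\cdots$, apply it to $I=\bigcup_{k} I(X_{k})$, pick $K$ with all $f_{j}\in I(X_{K})$, and conclude $X_{K}\subseteq Z(I)\subseteq X_{m}$ for every $m$, a contradiction.

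So your strategy is correct but takes a harder road. The payoff of the paper's route is twofold: it avoids the deep Noetherianity theorem altogether, and---crucially for this paper---Proposition 8.6.2 in \cite{BCR} is stated and proved over an arbitrary real closed field, so the worry you raise about extending from $\R$ to $R$ simply does not arise. Your detour through the algebraicity theorem (Proposition 8.4.6 of \cite{BCR}) is also unnecessary here, and would itself need justification over general $R$.
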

\begin{proof}  This is given by Proposition 8.6.2 of \cite{BCR} that the common zero set in $M$ of an ideal $I$ in the ring of Nash functions on $M$, is the common zero set of finitely many functions in $I$.
\end{proof}

\begin{Remark} Let $M\subset R^{n}$ be an affine Nash manifold of dimension $d$. Then the Zariski closure of $M$ in $R^{n}$ has dimension $d$.
\end{Remark}

\section{Algebraicity of semialgebraically connected affine Nash groups}
Remember that $R$ denotes an arbitrary real closed field.

We prove:
\begin{Theorem} Let $G$ be a semialgebraically connected affine Nash group over $R$. Then $G$ is Nash isogenous to the semialgebraic connected component of the identity of some $R$-algebraic group $H(R)$.  Equivalent there is a Nash homomorphism with finite kernel of $G$ into some $R$-algebraic group $H(R)$.
\end{Theorem}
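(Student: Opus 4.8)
The plan is to prove the second, equivalent, formulation: to produce a Nash homomorphism $f\colon G \to H(R)$ with finite kernel, where $H$ is an $R$-algebraic group. Since semialgebraic connectedness will let me assemble $f$ from generic data, I first algebraize the underlying manifold. By the algebraicity theorem (Proposition 8.4.6 of \cite{BCR}) there is a nonsingular affine $R$-algebraic variety $V$ and a Nash isomorphism $\theta\colon G \to C$ onto a semialgebraically connected component $C$ of $V(R)$; replacing $V$ by the Zariski closure of $C$ I may assume $V$ is irreducible of dimension $d = \dim G$ and that $C$ is Zariski dense in $V$. Transporting the group operations through $\theta$ turns $C$ into a Nash group, with Nash multiplication $\mu\colon C\times C \to C$ and inversion $\iota\colon C\to C$.

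The central point is that a Nash map is a \emph{branch of an algebraic correspondence}: each coordinate of $\mu$ satisfies a nontrivial polynomial equation over $R$, so the Zariski closure $W$ of the graph of $\mu$ in $V\times V\times V$ is an irreducible $R$-variety of dimension $2d$, and the projection $W\to V\times V$ is dominant and generically finite. The Nash maps $(x,y)\mapsto (x,\mu(x,y))$ and $(x,y)\mapsto(\mu(x,y),y)$ are Nash automorphisms of $C\times C$ (with inverses built from $\iota$), which forces the corresponding correspondences to be generically one-to-one. Passing to a saturated elementary extension $R^{*}\succ R$ and choosing $a$ generic in $V$ over $R$ and $b$ generic in $V$ over $R(a)$, I set $c=\mu(a,b)$; then $c$ is algebraic over $R(a,b)$, while the cancellation just noted gives $a \in \mathrm{acl}(R,b,c)$ and $b\in\mathrm{acl}(R,a,c)$, with $\dim(a/R)=\dim(b/R)=\dim(c/R)=d$ and $\dim(a,b/R)=2d$. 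This is exactly a generic group chunk (group configuration) defined over $R$.

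Now I invoke the Weil–Hrushovski reconstruction theorem (Weil's group-chunk theorem in its model-theoretic form over a field): such a generically associative, generically cancellative operation is the multiplication of an algebraic group. Because the configuration is defined over the real closed field $R$ (equivalently, descends from $R[\sqrt{-1}]$ under the real structure), I obtain an $R$-algebraic group $H$ with $\dim H = d$, together with an $R$-birational map $\psi\colon V \dashrightarrow H$ satisfying $\psi(\mu(x,y)) = \psi(x)\cdot\psi(y)$ generically. As a rational map, $\psi$ is Nash on the open semialgebraic set where it is defined, so it restricts to a generically defined Nash homomorphism from $C$ into a component of $H(R)$ which is generically finite-to-one.

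It remains to globalize. Since $G\cong C$ is semialgebraically connected, it is generated by any semialgebraic neighbourhood of the identity, and in particular by the (Zariski-dense, hence large) domain $D$ of $\psi$ together with its translates. I extend $\psi$ by the standard device $f(g) = \psi(g\cdot h)\cdot\psi(h)^{-1}$, where $h$ is chosen so that both $h$ and $gh$ lie in $D$; the generic homomorphism property shows this is independent of the choice of $h$ and defines a Nash homomorphism $f\colon G\to H(R)$, whose kernel is finite because $\psi$ is generically finite-to-one. As explained in the introduction, $f(G)$ then has finite index in its Zariski closure, an $R$-algebraic subgroup of $H$, which yields the desired Nash isogeny onto the semialgebraic connected component of an $R$-algebraic group. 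The main obstacle is the middle step: extracting an honest, single-valued generic group law from the a priori multivalued Nash one, and carrying out the Weil reconstruction over the non-algebraically-closed field $R$ with control of the field of definition, so that $H$ is genuinely defined over $R$ and the resulting homomorphism has finite kernel.
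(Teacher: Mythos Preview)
Your first stage---embedding $G$ as a Zariski-dense semialgebraic component of an $R$-variety $V$ and running the group configuration on generic $a,b,c=\mu(a,b)$ to produce an algebraic group $H$ over $R$---is essentially the paper's Step~I (Theorem~A of \cite{HP}), which the paper simply quotes. The divergence, and the gap, is in the globalization.

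You assert that Weil--Hrushovski yields a \emph{birational} $\psi\colon V\dashrightarrow H$ with $\psi(\mu(x,y))=\psi(x)\psi(y)$ generically. But your own analysis shows only that the projections $p_{13},p_{23}$ from $W=\overline{\mathrm{graph}(\mu)}$ are birational; the projection $p_{12}\colon W\to V\times V$ is merely generically finite, so the ``multiplication'' on $V$ is a multi-valued correspondence, not a rational map. Weil's group-chunk theorem needs a single-valued rational product; Hrushovski's configuration theorem handles the multi-valued case, but its output is an algebraic group $H$ whose generic type is only \emph{interalgebraic} with that of $V$---a finite-to-finite correspondence, not a birational equivalence. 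Without a single-valued $\psi$ on a Zariski open, the formula $f(g)=\psi(gh)\psi(h)^{-1}$ has no meaning. If instead you attempt the same formula with what Step~I actually delivers, namely a local Nash isomorphism $f\colon U\to H(R)$ defined only on a semialgebraic open neighbourhood $U$ of the identity, you need $h\in U\cap g^{-1}U$, which can be empty once $g$ leaves a neighbourhood of $1$. This is exactly the obstruction to extending a local homomorphism globally without simple connectedness, and is cognate to the error in \cite{HP} that the present paper is repairing. You flag this yourself in your final paragraph (``the main obstacle is the middle step''), but you do not resolve it.

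The paper's Step~II supplies the missing mechanism and it is genuinely different from your extension formula. One works inside the affine Nash manifold $G\times H(R)$ and, for each symmetric neighbourhood $U\subseteq\mathrm{dom}(f)$, lets $B_U$ be the \emph{Nash closure} (zero-set of Nash functions, not Zariski closure) of $\mathrm{graph}(f|_U)$. The DCC on Nash subsets (Lemma~1.2) forces $B=\bigcap_U B_U=B_{U_0}$ for some $U_0$; two short closure arguments then show $B$ is a subgroup of $G\times H(R)$ of dimension $d$, and $B^0$ gives the desired Nash isogeny. The Noetherianity of the Nash topology---strictly finer than Zariski, yet still satisfying DCC---is precisely what lets one pass from a locally defined graph to a global subgroup without any appeal to covers or to a rational $\psi$.
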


The strategy is as in the purported proof in \cite{HP} in the case $R = \R$.  
\newline
Step I, which does not make use of affineness, is to find a (connected) algebraic group $H$ over $R$ and a local Nash isomorphism between $G$ and $H(R)$.  By a local Nash isomorphism between $G$ and $H(R)$ we mean some Nash diffeomorphism $f$ between open semialgebraic neighbourhoods of the identity, $U$, $V$ of $G$, $H(R)$ respectively, such that for any $g,h\in U$, if $g\cdot h \in U$, then $f(g\cdot h) = f(g)\cdot f(h)$.

This is precisely Theorem A from \cite{HP} the proof of which goes through with no change for an arbitrary real closed field. Model theory, in the guise of the first author's group configuration techniques, played a role in the proof. The 
reader is referred to section 3, as well as Lemmas 4.8 and Corollary 4.9 of \cite{HP}.

 Step II replaces the ``proof of Theorem B" in section 4 of \cite{HP} which made use of universal covers of real Lie groups, but had a mistake (proof of the CLAIM on p. 240, in which we implicitly assumed that the image of a discrete subgroup under a continuous homomorphism of Lie group is also discrete). 
 
 Let $f$ be the local Nash isomorphism between $G$ and $H(R)$ given by Step I. Note that $G\times H(R)$ is an affine Nash group (in particular an affine Nash manifold). By Lemma 1.2 we have the $DCC$ on Nash subsets of $G\times H(R)$, so in particular any subset $Y$ of $G\times H(R)$ has a ``Nash closure": smallest Nash subset of $G\times H(R)$ containing $Y$.
For each open semialgebraic neighbourhood $U$ of the identity of $G$
 contained in $dom(f)$, let $A_{U}\subset G\times H(R)$ be the graph of the restriction of $f$ to $U$, and let $B_{U}\subset G\times H(R)$ be the Nash closure of $A_{U}$.

By Lemma 1.2 again  $B = \cap_{U}B_{U}$ is a finite subintersection, and so of the form $B_{U_{0}}$ for fixed $U_{0}$ which we may assume to be symmetric (i.e. $U_{0} = U_{0}^{-1}$).

\begin{Lemma} $dim(B) = d  = dim(G) (= dim(H(R))$. 
\end{Lemma}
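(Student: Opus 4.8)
The plan is to pin down $\dim(B)$ by squeezing it between $d$ and $d$, i.e.\ by establishing the two inequalities $\dim(B) \ge d$ and $\dim(B) \le d$ separately. The lower bound is immediate: since $B = B_{U_{0}}$ is by construction the Nash closure of $A_{U_{0}}$, we have $A_{U_{0}} \subseteq B$, so $\dim(B) \ge \dim(A_{U_{0}})$ by monotonicity of semialgebraic dimension. Now $A_{U_{0}}$ is the graph of the Nash map $f|_{U_{0}}$, so the first projection restricts to a semialgebraic bijection (in fact a Nash diffeomorphism) of $A_{U_{0}}$ onto $U_{0}$. As $U_{0}$ is a nonempty open semialgebraic subset of $G$, we get $\dim(A_{U_{0}}) = \dim(U_{0}) = \dim(G) = d$, hence $\dim(B) \ge d$.

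For the upper bound the key observation is that the Nash closure of $A_{U_{0}}$ is trapped inside its Zariski closure. Write $G \times H(R) \subseteq R^{N}$, and let $Z$ denote the Zariski closure of $A_{U_{0}}$ in $R^{N}$. The restriction to $G \times H(R)$ of any polynomial on $R^{N}$ is a Nash function, since the coordinate functions on an affine Nash manifold are Nash and the Nash functions form a ring. Consequently $Z \cap (G \times H(R))$, being cut out by finitely many such restricted polynomials, is a \emph{Nash} subset of $G \times H(R)$, and it plainly contains $A_{U_{0}}$. By minimality of the Nash closure, $B \subseteq Z \cap (G \times H(R)) \subseteq Z$, whence $\dim(B) \le \dim(Z)$.

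It then remains to compute $\dim(Z)$. Since $A_{U_{0}}$ is an affine Nash manifold of dimension $d$ (being Nash-diffeomorphic to $U_{0}$), Remark 1.3 applied to $A_{U_{0}} \subseteq R^{N}$ gives that its Zariski closure $Z$ has dimension exactly $d$. Therefore $\dim(B) \le d$, and combined with the lower bound this yields $\dim(B) = d$, as required.

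The argument is short, and the only point demanding care is the claim that $Z \cap (G \times H(R))$ is genuinely a Nash subset rather than merely a semialgebraic one: this is precisely what licenses the appeal to minimality of the Nash closure, and it rests on the fact that polynomials restrict to Nash functions on an affine Nash manifold. Everything else is routine bookkeeping with semialgebraic dimension together with Remark 1.3, which is exactly the tool that guarantees the dimension does not jump on passing from $A_{U_{0}}$ to its Zariski closure.
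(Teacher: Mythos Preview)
Your proof is correct and follows essentially the same approach as the paper: both sandwich $B = B_{U_{0}}$ between $A_{U_{0}}$ and the Zariski closure of $A_{U_{0}}$, and then invoke Remark~1.3. The paper simply asserts the chain $A_{U} \subset B_{U} \subseteq \text{Zariski closure of } A_{U}$ without further comment, whereas you supply the justification for the right-hand inclusion (that the Zariski closure, intersected with $G\times H(R)$, is a Nash subset because polynomials restrict to Nash functions); this is a welcome bit of extra care rather than a different argument.
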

\begin{proof}  Note that for each $U$ the dimension of $A_{U}$ is $d$, and hence by 1.3 the the dimension of the Zariski closure of $A_{U}$ is $d$. As
 $A_{U} \subset B_{U} \subseteq$ Zariski closure of $A_{U}$, it follows that $dim(B_{U}) = d$.
 \end{proof}

\begin{Lemma} $B$ is a subgroup of $G\times H(R)$.
\end{Lemma}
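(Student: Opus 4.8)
The plan is to check that $B$ contains the identity and is closed under inversion and multiplication. The engine of the argument is the observation that, because $B=\bigcap_U B_U$, one has $B_W=B$ for \emph{every} open semialgebraic neighbourhood $W$ of $e_G$ with $W\subseteq U_0$: the inclusion $A_W\subseteq A_{U_0}$ gives $B_W\subseteq B_{U_0}=B$, while the intersection defining $B$ gives $B\subseteq B_W$. Thus $B$ is the Nash closure of arbitrarily small pieces of the graph of $f$, and this is what allows the purely local multiplicativity of $f$ to be promoted to all of $B$.

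The identity is immediate: setting $g=h=e_G$ in the local homomorphism condition gives $f(e_G)=f(e_G)^2$, so $f(e_G)=e_{H(R)}$ and $(e_G,e_{H(R)})\in A_{U_0}\subseteq B$. For inversion, let $\iota$ be the inversion map of $G\times H(R)$, a Nash automorphism. Symmetry of $U_0$ gives, for $g\in U_0$, that $g$, $g^{-1}$ and $gg^{-1}=e_G$ all lie in $U_0$, whence $f(g^{-1})=f(g)^{-1}$ and $\iota(g,f(g))=(g^{-1},f(g^{-1}))\in A_{U_0}$; so $\iota(A_{U_0})=A_{U_0}$. Since a Nash automorphism commutes with taking Nash closure, $\iota(B)=\iota(B_{U_0})=B$.

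The main step is multiplicative closure. First I would show $a\cdot B\subseteq B$ for each $a=(g_0,f(g_0))\in A_{U_0}$. Given such an $a$, choose by continuity of multiplication an open semialgebraic $W\ni e_G$ with $W\subseteq U_0$ and $g_0W\subseteq U_0$; then for $g'\in W$ all of $g_0$, $g'$, $g_0g'$ lie in $U_0\subseteq\mathrm{dom}(f)$, so $a\cdot(g',f(g'))=(g_0g',f(g_0g'))\in A_{U_0}$, i.e. $\mu_a(A_W)\subseteq A_{U_0}$, where $\mu_a$ is left translation by $a$. Taking Nash closures, using that $\mu_a$ is a Nash automorphism (hence commutes with Nash closure) and using $B_W=B$, we obtain $\mu_a(B)=\mu_a(B_W)\subseteq B_{U_0}=B$, i.e. $aB\subseteq B$. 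Thus $A_{U_0}\subseteq S$, where $S:=\{x\in G\times H(R):xB\subseteq B\}$. Writing $S=\bigcap_{y\in B}\rho_y^{-1}(B)$, with $\rho_y$ right translation by $y$ (a Nash automorphism, so each $\rho_y^{-1}(B)$ is a Nash subset), the descending chain condition of Lemma 1.2 shows that this intersection equals a finite subintersection and is therefore a Nash subset. Since the Nash subset $S$ contains $A_{U_0}$, it contains $B_{U_0}=B$; that is, $B\cdot B\subseteq B$.

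Together with the identity and inversion this shows $B$ is a subgroup. I expect the main obstacle to lie in this last step, and specifically in the two points where more than bookkeeping is required: the promotion $B_W=B$ for all small $W$, which is the sole essential use of the intersection $\bigcap_U B_U$ and without which local multiplicativity could not be globalised; and the fact that $S$ is Nash, which is not obvious a priori since $S$ is an intersection of infinitely many Nash subsets and relies on the descending chain condition.
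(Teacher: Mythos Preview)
Your proof is correct and follows essentially the same two–step strategy as the paper: first show that left translation by any element of the graph of $f$ sends $B$ into $B$, then take Nash closure once more to upgrade this to $B\cdot B\subseteq B$; the crucial ingredient in both cases is exactly your observation that $B_W=B$ for every small $W$. The only cosmetic difference is that the paper fixes a single symmetric $U_1$ with $U_1\cdot U_1\subseteq U_0$ and, for each fixed element, works with the single Nash set $X_a=\{x:a\cdot x\in B\}$ (which is visibly Nash as a preimage), thereby avoiding your extra appeal to the DCC for the infinite intersection defining~$S$; but this is a matter of packaging, not of substance.
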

\begin{proof} Let $U_{1}\subseteq U_{0}$ be a symmetric neighbourhood of the identity in $G$ such that $U_{1}\cdot U_{1}\subseteq U_{0}$. We now work in the group $G\times H(R)$.
\newline
{\em Claim 1.} For any $a\in A_{U_{1}}$, and $x\in B$, $a\cdot x \in B$.
\newline
{\em Proof.} Note that $X_{a} = \{x\in G\times H(R): a\cdot x \in B\}$ is a Nash subset of $G\times H(R)$  (as $B$ is a Nash subset and the group operation is Nash). But if $x\in A_{U_{1}}$ then $a\cdot x \in A_{U_{0}} \subset B_{U_{0}} = B$. Hence $X_{a}$ contains $A_{U_{1}}$. But  $B_{U_{1}} = B_{U_{0}} = B$, so $X_{a}$ contains $B$ as required. 

\vspace{2mm}
\noindent
Using Claim 1 we obtain in a similar fashion:
\newline
{\em Claim 2.} For any $a\in B$ and $x\in B$, $x\cdot a  \in B$.

\vspace{2mm}
\noindent
Likewise we see that $B$ is closed under inversion. Hence the Lemma is proved.

\end{proof}

So $B$ is a semialgebraic subgroup of $G\times H(R)$ of dimension $d$ ($=dim(G) = dim(H(R))$). It follows that $B^{0}$ is a semialgebraic subgroup of $G\times H(R)^{0}$ of dimension $d$, projecting onto each factor. The  ``kernel" of $B$, $\{g\in G: (g,e) \in B\}$ and ``cokernel" of $B$, 
$\{h\in H(R)^{0}: (e,h)\in B\}$ are clearly finite, normal (so central) subgroups of $G$, $H(R)^{0}$ respectively. Let $C$ denote the cokernel. As we may assume $H(R)^{0}$ to be Zariski dense in the connected algebraic group $H$, $C$ is also a finite central subgroup of $H$. 
Then $H/C$ is a connected algebraic group defined over $R$, and we have a semialgebraic isomorphism between $(H/C)(R)^{0}$ and $H(R)^{0}/C$. 
So the isogeny from $G$ onto $H(R)^{0}/C$ can be identified with a (Nash) isogeny from $G$ onto the (semialgebraic) connected component of $(H/C)(R)$.
The proof of 2.1 is complete.

\section{The non connected case}

This section is devoted to a proof of:
\begin{Proposition}  Let $G$ be an arbitrary (not necessarily semialgebraically connected) affine Nash group over $R$. Then there is a Nash homomorphism with finite kernel from $G$ into some $R$-algebraic group $H(R)$.
\end{Proposition}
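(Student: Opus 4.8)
The plan is to reduce at once to Theorem 2.1 and then to dispose of the finite quotient $F = G/G^{0}$ by a universal-embedding device, which has the pleasant effect of avoiding any need to realize the conjugation action of $G$ on $G^{0}$ by algebraic automorphisms.

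Since $G$ is an affine Nash group it has only finitely many semialgebraically connected components, so $G^{0}$ is a normal subgroup of finite index and $F := G/G^{0}$ is a finite group; write $\pi\colon G\to F$ for the quotient map. Applying Theorem 2.1 to the semialgebraically connected affine Nash group $G^{0}$ produces a Nash homomorphism $f_{0}\colon G^{0}\to H_{0}(R)$ with finite kernel $N_{0}$, where $H_{0}$ is an $R$-algebraic group.

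I would then embed $G$ into the regular wreath product $W := G^{0}\wr F = (G^{0})^{F}\rtimes F$, where $(G^{0})^{F}$ is the finite direct power of functions $F\to G^{0}$ and $F$ acts by $(b\cdot\phi)(x)=\phi(b^{-1}x)$, using the Krasner--Kaloujnine construction: fixing a section $s\colon F\to G$ of $\pi$ with $s(e)=e$, set
\[
\psi(g)=\bigl((\,b\mapsto s(b)^{-1}\,g\,s(\pi(g)^{-1}b)\,)_{b\in F},\ \pi(g)\bigr).
\]
A direct computation shows $\psi$ is an injective homomorphism, and since each coordinate composes the Nash operations of $G$ with the finite-valued, piecewise-constant semialgebraic maps $g\mapsto s(\pi(g)^{-1}b)$, the map $\psi$ is semialgebraic. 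Pushing forward coordinatewise, the diagonal map $f_{0}^{F}$ is $F$-equivariant for the permutation action, so $f_{0}^{F}\rtimes\mathrm{id}_{F}\colon W\to H_{0}(R)^{F}\rtimes F=(H_{0}\wr F)(R)$ is a homomorphism with finite kernel $N_{0}^{F}$, where $H := H_{0}\wr F$ is an $R$-algebraic group (a finite power of $H_{0}$ extended by the finite permutation group $F$). The composite $\Phi := (f_{0}^{F}\rtimes\mathrm{id}_{F})\circ\psi\colon G\to H(R)$ is a semialgebraic homomorphism between affine Nash groups, hence Nash by the fact that a semialgebraic homomorphism between Nash groups is Nash; and evaluating $\psi(g)$ at $b=e$ shows that $\Phi(g)=e$ forces $\pi(g)=e$ and $f_{0}(g)=e$, so that $\ker\Phi\subseteq N_{0}$ is finite, as required.

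The main obstacle is not conceptual but categorical: one must check carefully that the Krasner--Kaloujnine embedding stays inside the Nash/semialgebraic world (so that the cited transfer of semialgebraic homomorphisms to Nash ones applies), and that $H_{0}\wr F$ is genuinely an $R$-algebraic group with $(H_{0}\wr F)(R)=H_{0}(R)^{F}\rtimes F$. The reason for routing through the regular wreath product rather than a naive semidirect product is that it simultaneously absorbs the outer conjugation action of $G$ on $G^{0}$ and the (possibly non-split) extension cocycle of $1\to G^{0}\to G\to F\to 1$; in this way one never has to prove that conjugation by elements of $G$ is induced by algebraic automorphisms of $H_{0}$, which would be the genuine difficulty in a more direct approach.
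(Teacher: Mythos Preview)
Your argument is correct and genuinely different from the paper's. The paper proceeds by first arranging (via Theorem~2.1 and a finite quotient) that $G^{0}=H_{1}(R)^{0}$ for a connected algebraic group $H_{1}$ over $R$, then passes to a real closed field of cardinality continuum so that $R(i)=\C$, and uses the universal cover $\widetilde{H_{1}}\to H_{1}$ as a complex Lie group: each conjugation $\alpha_{g}$ lifts to an analytic automorphism $\widetilde{\alpha_{g}}$ of $\widetilde{H_{1}}$, and after enlarging the kernel $\Gamma$ to a finite-index overgroup $\Gamma'$ invariant under all $\widetilde{\alpha_{g}}$, these descend to \emph{rational} automorphisms $\beta_{g}$ of $H_{2}=\widetilde{H_{1}}/\Gamma'$. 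The group $H$ is then built by hand from $H_{2}$ and the cocycle data of the extension $1\to G^{0}\to G\to F\to 1$, and one checks it is algebraic over $R$.

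Your wreath-product route sidesteps all of this: the Krasner--Kaloujnine embedding absorbs both the outer action and the extension cocycle into the regular $F$-action on coordinates, so you never need to show that conjugation by $g\in G$ is induced by an algebraic automorphism of $H_{0}$. This is considerably more elementary---no transcendental input, no change of base field, no Lie-theoretic covering spaces---and the checks you flag (semialgebraicity of $\psi$, $R$-algebraicity of $H_{0}\wr F$, identification of $R$-points) are routine. The price is that your target $H_{0}\wr F$ has dimension $|F|\cdot\dim G$ rather than $\dim G$; the paper's construction yields an $H$ with $H^{0}=H_{2}$ isogenous to $H_{1}$, so that $f(G)$ has finite index in $H(R)$, which is closer to the ``isogeny'' formulation highlighted in the introduction. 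For the Proposition as stated, however, your argument suffices.
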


Let $G^{0}$ be the semialgebraically connected component of $G$. All we will use is that $G^{0}$ is affine Nash. 
Note that we are free to replace $G$ by $G/N$ for $N$ any finite normal subgroup. Hence by Theorem 2.1 we may assume that  $G^{0} = H_{1}(R)^{0}$ where $H_{1}$ is a connected algebraic group defined over $R$. For $g\in G$, 
let $\alpha_{g}: G^{0} \to G^{0}$ be conjugation by $g$. The basic idea is to extend each $\alpha_{g}$ to an $R$-rational automorphism $\beta_{g}$ of $H_{1}$, at the expense of replacing $H_{1}$ by an isogenous algebraic group defined over $R$. It will then be easy to construct the required algebraic group $H$ (whose connected component is $H_{1}$). 

We go through various steps. First we may and will assume that $R$ is of cardinality continuum, by passing to an elementary extension or substructure of $R$. Hence $R(i)$ is an algebraically closed field of cardinality the continuum, which we can assume to be the field $\C$ of complex numbers. We identify the algebraic group $H_{1}$ with its group of complex points $H_{1}(\C)$. Let $\pi: \widetilde{H_{1}}\to H_{1}$ be the universal cover of $H_{1}$, a simply connected complex Lie group, and $\Gamma$ the kernel of $\pi$, a central discrete subgroup.  For $g\in G$, let $K_{g}$ be the Zariski closure in $H_{1}\times H_{1}$ of the graph of $\alpha_{g}$. It is not hard to see that $K_{g}$ is a connected algebraic subgroup of
$H_{1}\times H_{1}$ with finite-to-one projections on each coordinate, and with both ``kernel" and "cokernel" being central. Let $\widetilde{K_{g}}$ be the universal cover of $K_{g}$, also a complex Lie group. 

\begin{Lemma} (i) $\widetilde{K_{g}}$ naturally identifies with the graph of an automorphism $\widetilde{\alpha_{g}}$ of $\widetilde{H_{1}}$, which lifts $\alpha_{g}$.
\newline
(ii) $\widetilde{\alpha_{g}}(\Gamma) \cap \Gamma$ has finite index in each of $\widetilde{\alpha_{g}}(\Gamma)$ and $\Gamma$.
\newline
(iii) When $g\in G^{0}$, $\widetilde{\alpha_{g}}$ acts trivially on $Z(\widetilde{H_{1}})$, in particular acts trivially on $\widetilde{\alpha_{h}}(\Gamma)$ for all $h\in G$.
\end{Lemma}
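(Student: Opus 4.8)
The plan is to deduce all three parts from covering-space theory for connected complex Lie groups. Write $p_1,p_2:K_g\to H_1$ for the two coordinate projections and $\rho:\widetilde{K_g}\to K_g$ for the universal covering, so that $\ker\rho$ is the fundamental group of $K_g$, a central discrete subgroup. By hypothesis $K_g$ is connected and $p_1,p_2$ are surjective with finite (hence discrete) kernels, so each $p_i$ is a covering homomorphism; thus $p_i\circ\rho$ exhibits the simply connected group $\widetilde{K_g}$ as a universal cover of $H_1$, and the lift $\tilde p_i:\widetilde{K_g}\to\widetilde{H_1}$ determined by $\pi\circ\tilde p_i=p_i\circ\rho$ and $\tilde p_i(e)=e$ is an isomorphism of Lie groups. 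For (i) I set $\widetilde{\alpha_g}:=\tilde p_2\circ\tilde p_1^{-1}$, a holomorphic automorphism of $\widetilde{H_1}$, and note that $k\mapsto(\tilde p_1(k),\tilde p_2(k))$ identifies $\widetilde{K_g}$ with its graph. That $\widetilde{\alpha_g}$ lifts $\alpha_g$ I would check on differentials at $e$: since $T_eK_g\subseteq\mathfrak h\oplus\mathfrak h$ is the complexification of $T_eA_g$, the graph of $d(\alpha_g)_e=\mathrm{Ad}(g)$, and since $d\pi_e$ and $d\rho_e$ are identifications, $d(\widetilde{\alpha_g})_e=d(p_2)_e\circ(d(p_1)_e)^{-1}=\mathrm{Ad}(g)=d(\alpha_g)_e$; as $\widetilde{H_1}$ is simply connected, $\widetilde{\alpha_g}$ is the unique automorphism with this differential.

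For (ii), from $\pi\circ\tilde p_i=p_i\circ\rho$ one gets $\tilde p_i^{-1}(\Gamma)=\rho^{-1}(\ker p_i)$, so, transporting through the isomorphism $\tilde p_2$, $\Gamma=\tilde p_2(\rho^{-1}(\ker p_2))$ and $\widetilde{\alpha_g}(\Gamma)=\tilde p_2(\tilde p_1^{-1}(\Gamma))=\tilde p_2(\rho^{-1}(\ker p_1))$. Since the coordinate projections of $K_g\subseteq H_1\times H_1$ have trivial common kernel, the intersection is $\tilde p_2(\rho^{-1}(\ker p_1\cap\ker p_2))=\tilde p_2(\ker\rho)$. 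Now $\rho^{-1}(\ker p_i)/\ker\rho\cong\ker p_i$ is finite, so $\ker\rho$ has finite index in each $\rho^{-1}(\ker p_i)$; hence $\widetilde{\alpha_g}(\Gamma)\cap\Gamma$ has index $|\ker p_1|$ in $\widetilde{\alpha_g}(\Gamma)$ and index $|\ker p_2|$ in $\Gamma$.

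For (iii), when $g\in G^0=H_1(R)^0$ the map $\alpha_g$ is the restriction to the real points of the inner automorphism of $H_1$ by $g$. Picking $\tilde g\in\widetilde{H_1}$ with $\pi(\tilde g)=g$, conjugation $\mathrm{Inn}_{\tilde g}$ on $\widetilde{H_1}$ satisfies $\pi\circ\mathrm{Inn}_{\tilde g}=\alpha_g\circ\pi$ near $e$ and has differential $\mathrm{Ad}(\tilde g)=\mathrm{Ad}(g)$, so by the uniqueness established in (i) it equals $\widetilde{\alpha_g}$. An inner automorphism fixes $Z(\widetilde{H_1})$ pointwise, so $\widetilde{\alpha_g}$ does too. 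Finally, for any $h\in G$ the automorphism $\widetilde{\alpha_h}$ carries the central subgroup $\Gamma$ into $\widetilde{\alpha_h}(Z(\widetilde{H_1}))=Z(\widetilde{H_1})$, which $\widetilde{\alpha_g}$ fixes pointwise; hence $\widetilde{\alpha_g}$ acts trivially on $\widetilde{\alpha_h}(\Gamma)$.

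I expect the main obstacle to be part (i): correctly invoking that a finite covering homomorphism induces an isomorphism of universal covers, and pinning down the precise sense in which $\widetilde{\alpha_g}$ ``lifts'' $\alpha_g$. The example $H_1=\mathbb{G}_m$ with $\alpha_g:x\mapsto x^{p/q}$, which lifts to $u\mapsto(p/q)u$ on $\C=\widetilde{\C^{*}}$, shows that compatibility holds only near the identity, equivalently on differentials, and \emph{not} globally on $\pi^{-1}(G^0)$; this is exactly why the lift is best pinned down through the differential at $e$ together with uniqueness of automorphisms of a simply connected group. Once (i) is in place, (ii) and (iii) are essentially formal.
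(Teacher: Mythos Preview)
Your proof is correct and, for parts (i) and (iii), is essentially the paper's argument carried out with more care: the paper also lifts the finite coverings $p_1,p_2:K_g\to H_1$ to isomorphisms $\widetilde{K_g}\to\widetilde{H_1}$ and, for $g\in G^0$, observes that $K_g$ is already the graph of conjugation by $g$ so that $\widetilde{\alpha_g}$ is inner. Your explicit treatment of ``lifts $\alpha_g$'' via the differential at $e$, together with your cautionary $\mathbb{G}_m$ example, is more precise than the paper, which simply records the construction without elaborating on the sense of the lift.

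For part (ii) your argument differs from the paper's. The paper argues by coset counting: from the finiteness of the cokernel of $K_g$ one sees that $\widetilde{\alpha_g}(\Gamma)$ meets only finitely many cosets of $\Gamma$, giving finite index in $\widetilde{\alpha_g}(\Gamma)$, and then invokes $\widetilde{\alpha_{g^{-1}}}$ for the other index. You instead transport everything into $\widetilde{K_g}$ via $\tilde p_2$ and identify the three groups concretely as $\rho^{-1}(\ker p_2)$, $\rho^{-1}(\ker p_1)$, and $\ker\rho$, reading off the exact indices $|\ker p_1|$ and $|\ker p_2|$. Your route is more direct and yields the two index statements simultaneously, without the symmetry step; the paper's route is slightly more conceptual but needs the (easy, though unstated) identification $\widetilde{\alpha_{g^{-1}}}=\widetilde{\alpha_g}^{-1}$ to close.
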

\begin{proof} (i) The coordinate projections $p_{1}, p_{2} :K_{g} \to H_{1}$ lift to  (analytic) isomorphisms  $p_{1}', p_{2}'$ between 
$\widetilde{K_{g}}$ and $\widetilde{H_{1}}$, whereby $\widetilde {K_{g}}$ is the graph of an analytic automorphism of $\widetilde{H_{1}}$.

\vspace{2mm}
\noindent
(ii) By considering $\widetilde{\alpha_{g^{-1}}}$ it suffices to prove that for each $g$, 
\newline
(*) $\widetilde{\alpha_{g}}(\Gamma)\cap \Gamma$ has finite index in $\widetilde{\alpha_{g}}(\Gamma)$. 
\newline
Now $K_{g}\subset H_{1}\times H_{1}$ has finite cokernel enumerated by $\overline d$ say. Let ${\overline d}^{\prime}$ be a lifting of the tuple 
$\overline d$ to a tuple in $\widetilde{H_{1}}$. Let $a\in \Gamma$. Then $\widetilde{\alpha_{g}}(a)\in \Gamma\cdot {\overline d}^{\prime}$. Hence 
$\widetilde{\alpha_{g}}(\Gamma)$ meets only finitely many translates of $\Gamma$ in $\widetilde{H_{1}}$, yielding (*).

\vspace{2mm}
\noindent
(iii) If $g\in G^{0}$, then $\alpha_{g}$ is already a rational map, so the Zariski closure $K_{g}$ of its graph is still conjugation by $g$ in $H_{1}$. It is easy to see that $\widetilde{\alpha_{g}}$ is then conjugation in $\widetilde{H_{1}}$ by some (any) lift $\widetilde g$ of $g$, hence acts trivially on $Z(\widetilde{H_{1}})$.
\end{proof}

\begin{Corollary}  Let $\Gamma'$ be the subgroup of $\widetilde{H_{1}}$ generated by the set of $\widetilde{\alpha_{g}}(\Gamma)$ for $g\in G$. Then 
$\Gamma$ is a subgroup of $\Gamma'$ of finite index.
\end{Corollary}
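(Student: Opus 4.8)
The plan is to show first that $\Gamma \subseteq \Gamma'$ and that the whole configuration lives inside the centre of $\widetilde{H_1}$, and then to reduce the (a priori infinite) generating family to a finite one, after which finite index is a routine commensurability computation.

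First I would observe that $\Gamma \subseteq \Gamma'$ is immediate, since $\widetilde{\alpha_e} = \mathrm{id}$ (taking $g = e$). Next, since $\Gamma = \ker\pi$ is central in $\widetilde{H_1}$ and each $\widetilde{\alpha_g}$ is an automorphism of $\widetilde{H_1}$, we have $\widetilde{\alpha_g}(\Gamma) \subseteq \widetilde{\alpha_g}(Z(\widetilde{H_1})) = Z(\widetilde{H_1})$; writing $Z = Z(\widetilde{H_1})$ this gives $\Gamma \subseteq \Gamma' \subseteq Z$, so all the groups in question are abelian and the subgroup generated by the $\widetilde{\alpha_g}(\Gamma)$ is simply their sum. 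The only real content is therefore the finiteness of $[\Gamma' : \Gamma]$.

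The key step is to see that there are only finitely many distinct subgroups among the $\widetilde{\alpha_g}(\Gamma)$. For this I would use that $g \mapsto \widetilde{\alpha_g}$ is a homomorphism from $G$ into $\mathrm{Aut}(\widetilde{H_1})$: since $\alpha_{gh} = \alpha_g \circ \alpha_h$ on $G^0$, and lifts of automorphisms to the simply connected group $\widetilde{H_1}$ are unique once we require them to fix the identity, uniqueness forces $\widetilde{\alpha_{gh}} = \widetilde{\alpha_g} \circ \widetilde{\alpha_h}$. Restricting to the characteristic subgroup $Z$ gives an action of $G$ on $Z$ by automorphisms, and by Lemma 3.2(iii) the connected component $G^0$ acts trivially on $Z$. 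As $G$ is an affine Nash group, $G^0$ has finite index in $G$; choosing coset representatives $g_1, \dots, g_k$ and writing an arbitrary $g = g_i h$ with $h \in G^0$, we get $\widetilde{\alpha_g}|_Z = \widetilde{\alpha_{g_i}}|_Z \circ \widetilde{\alpha_h}|_Z = \widetilde{\alpha_{g_i}}|_Z$, whence $\widetilde{\alpha_g}(\Gamma) = \widetilde{\alpha_{g_i}}(\Gamma)$. Thus $\{\widetilde{\alpha_g}(\Gamma) : g \in G\} = \{\widetilde{\alpha_{g_i}}(\Gamma) : 1 \le i \le k\}$ and $\Gamma' = \sum_{i=1}^k \widetilde{\alpha_{g_i}}(\Gamma)$. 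I would then conclude by commensurability: by Lemma 3.2(ii) each $\widetilde{\alpha_{g_i}}(\Gamma)$ is commensurable with $\Gamma$ inside the abelian group $Z$, and a finite sum of subgroups of an abelian group each commensurable with $\Gamma$ is commensurable with $\Gamma$. Indeed if $B \supseteq \Gamma$ has finite index and $A$ is commensurable with $\Gamma$ then $[A + B : B] = [A : A \cap B] \le [A : A \cap \Gamma] < \infty$, so one adjoins the $\widetilde{\alpha_{g_i}}(\Gamma)$ one at a time, keeping finite index over $\Gamma$. Hence $[\Gamma' : \Gamma] < \infty$, as required.

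The step I expect to be the main obstacle is precisely this reduction to finitely many subgroups, i.e. making rigorous that the $\widetilde{\alpha_g}$ compose correctly so that the induced action on $Z$ factors through the finite quotient $G/G^0$. This is the point where the statement could otherwise fail: without it one would only know that $\Gamma'$ is generated by infinitely many subgroups each commensurable with $\Gamma$, which could perfectly well be an infinitely generated group (as with the subgroups $\tfrac{1}{n}\Z$ of $\Q$), since no uniform bound on the indices $[\widetilde{\alpha_g}(\Gamma) : \widetilde{\alpha_g}(\Gamma) \cap \Gamma]$ is available directly from Lemma 3.2(ii) alone. Once the finiteness of the family is secured via Lemma 3.2(iii) and the finite index of $G^0$, the remainder is elementary.
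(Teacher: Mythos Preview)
Your proof is correct and follows essentially the same route as the paper's: reduce the family $\{\widetilde{\alpha_g}(\Gamma)\}$ to finitely many subgroups via Lemma~3.2(iii) and the finiteness of $G/G^{0}$, then invoke the commensurability statement 3.2(ii). The paper is more terse---it simply asserts that $\widetilde{\alpha_g}(\Gamma)$ depends only on the coset of $g$ modulo $G^{0}$ and that finite index follows from 3.2(ii)---whereas you make explicit both the homomorphism property $\widetilde{\alpha_{gh}} = \widetilde{\alpha_g}\circ\widetilde{\alpha_h}$ (which the paper uses implicitly) and the inductive commensurability computation at the end; your observation that everything sits inside $Z(\widetilde{H_1})$ so that $\Gamma'$ is a genuine sum is also helpful scaffolding that the paper omits.
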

\begin{proof} Clearly $\Gamma\subset \Gamma'$, as by Lemma 3.2(iii) $\widetilde{\alpha_{g}}(\Gamma) = \Gamma$ for $g\in G^{0}$.
Also by 3.2(iii) again, $\widetilde{\alpha_{g}}(\Gamma)$ depends only on the coset of $g$ modulo $G^{0}$. So, as $G^{0}$ has finite index in $G$, 
$\{\widetilde{\alpha_{g}}(\Gamma):g\in G\}$ is finite, so by Lemma 3.2(ii), $\Gamma$ has finite index in $\Gamma'$.
\end{proof}

By the Corollary $N = \Gamma'/\Gamma$ is a finite (central) subgroup of $H_{1}$, and the quotient of $H_{1}$ by $N$ is a connected algebraic group $H_{2}$, say (which also equals  $\widetilde{H_{1}}/\Gamma'$). Let $\tau:H_{1} \to H_{2}$ be the canonical surjective homomorphism.

\begin{Lemma} (i) For each $g\in G$, the automorphism $\widetilde{\alpha_{g}}$ of $\widetilde{H_{1}}$ induces an  automorphism $\beta_{g}$ of 
$H_{2} = \widetilde{H_{1}}/\Gamma'$. 
\newline
(ii) For any $g\in G$, $\beta_{g}$ is a rational automorphism of the algebraic group $H_{2}$, and $(\beta_{g}\circ \tau)|G^{0} = 
(\tau \circ \alpha_{g})|G^{0}$.
\end{Lemma}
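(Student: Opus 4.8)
The plan is to construct the assignment $g\mapsto\beta_g$ out of two facts: that $g\mapsto\widetilde{\alpha_g}$ is a homomorphism from $G$ into $\mathrm{Aut}(\widetilde{H_1})$, and that the graph of the map induced by $\widetilde{\alpha_g}$ on $H_2$ is precisely the image of the algebraic group $K_g$ under $\tau\times\tau$. The first fact forces each $\widetilde{\alpha_g}$ to preserve $\Gamma'$ and so gives (i); the second exhibits $\mathrm{graph}\,\beta_g$ as a Zariski-closed subgroup and so gives the rationality in (ii).

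For (i), I would first verify that $g\mapsto\widetilde{\alpha_g}$ is a homomorphism. The difficulty is that for $g\notin G^{0}$ the automorphism $\widetilde{\alpha_g}$ does not descend to $H_1$ (this is exactly why we pass to $H_2$), so one cannot argue on $H_1$ directly; instead I work on the Lie algebra $\mathfrak{h}=\mathrm{Lie}(\widetilde{H_1})=\mathrm{Lie}(H_1)$. Since $\widetilde{K_g}$ is the graph of $\widetilde{\alpha_g}$ and $\mathrm{Lie}(\widetilde{K_g})=\mathrm{Lie}(K_g)$ is the graph of the complexified differential at $e$ of conjugation by $g$ on $G^{0}$, the differential $d\widetilde{\alpha_g}$ equals the image of $g$ under the homomorphism $\rho\colon G\to\mathrm{Aut}(\mathfrak{h})$ coming from the conjugation action of $G$ on $G^{0}$ (here $\rho(gh)=\rho(g)\rho(h)$ because $\alpha_{gh}=\alpha_g\circ\alpha_h$ and $\alpha_h(e)=e$). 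As $\widetilde{H_1}$ is simply connected, $\mathrm{Aut}(\widetilde{H_1})\to\mathrm{Aut}(\mathfrak{h})$ is an isomorphism, so $g\mapsto\widetilde{\alpha_g}$ is a homomorphism. Now $\Gamma'=\langle\widetilde{\alpha_h}(\Gamma):h\in G\rangle$, whence
\[
\widetilde{\alpha_g}(\Gamma')=\langle\widetilde{\alpha_g}\widetilde{\alpha_h}(\Gamma):h\in G\rangle=\langle\widetilde{\alpha_{gh}}(\Gamma):h\in G\rangle=\Gamma',
\]
the last equality holding since $gh$ ranges over $G$ as $h$ does. Thus $\widetilde{\alpha_g}$ descends to an automorphism $\beta_g$ of $\widetilde{H_1}/\Gamma'=H_2$, proving (i).

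For (ii), write $\sigma=\tau\circ\pi\colon\widetilde{H_1}\to H_2$, so that $\ker\sigma=\Gamma'$ and $\beta_g\circ\sigma=\sigma\circ\widetilde{\alpha_g}$ by construction. From Lemma 3.2(i) one has $(\pi\times\pi)(\mathrm{graph}\,\widetilde{\alpha_g})=K_g$, and applying $\tau\times\tau$ gives
\[
(\tau\times\tau)(K_g)=(\sigma\times\sigma)(\mathrm{graph}\,\widetilde{\alpha_g})=\{(\sigma(x),\beta_g(\sigma(x))):x\in\widetilde{H_1}\}=\mathrm{graph}\,\beta_g,
\]
using surjectivity of $\sigma$. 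The left-hand side is the image of the algebraic group $K_g$ under the algebraic-group morphism $\tau\times\tau$, hence a Zariski-closed subgroup of $H_2\times H_2$; being the graph of the bijection $\beta_g$, its first projection is a bijective morphism onto $H_2$, which in characteristic $0$ is an isomorphism of algebraic groups. Therefore $\beta_g$ is a birational (indeed biregular) automorphism of $H_2$. Finally, for $x\in G^{0}$ the point $(x,\alpha_g(x))$ lies in the graph of $\alpha_g$, hence in its Zariski closure $K_g$, so $(\tau(x),\tau(\alpha_g(x)))\in\mathrm{graph}\,\beta_g$, i.e. $\beta_g(\tau(x))=\tau(\alpha_g(x))$; this is the required identity $(\beta_g\circ\tau)|G^{0}=(\tau\circ\alpha_g)|G^{0}$.

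The main obstacle is the homomorphism property in (i): because $\widetilde{\alpha_g}$ generally fails to descend to $H_1$, the relation $\widetilde{\alpha_g}\widetilde{\alpha_h}=\widetilde{\alpha_{gh}}$ cannot be read off on $H_1$ and must be obtained at the Lie algebra level, using rigidity of lifts to the simply connected cover. Part (ii) is then essentially formal once one recognizes $\mathrm{graph}\,\beta_g=(\tau\times\tau)(K_g)$; the point of enlarging $\Gamma$ to $\Gamma'$ is exactly to absorb the finite central cokernels of all the $K_g$ into $N=\Gamma'/\Gamma$, which is what makes this image single-valued and hence a genuine graph.
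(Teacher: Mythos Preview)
Your proof is correct and follows essentially the same route as the paper: for (ii) you identify $\mathrm{graph}\,\beta_g$ with $(\tau\times\tau)(K_g)$ exactly as the paper does, and for (i) you show $\Gamma'$ is $\widetilde{\alpha_g}$-invariant, which is all the paper asserts (``clear as $\Gamma'$ is invariant under each $\widetilde{\alpha_g}$''). Your Lie-algebra argument for the homomorphism identity $\widetilde{\alpha_g}\widetilde{\alpha_h}=\widetilde{\alpha_{gh}}$ is a welcome addition, since the paper leaves this step implicit; an equivalent way to see it is that both sides agree with $\alpha_{gh}$ on a neighbourhood of $e$ (via the local section of $\pi$) and hence agree globally by analyticity on the connected $\widetilde{H_1}$.
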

\begin{proof} (i) is clear as $\Gamma'$ is invariant under each $\widetilde{\alpha_{g}}$.
\newline
(ii) The graph of $\beta_{g}$ is clearly the image of $K_{g}$ under the projection $\tau \times \tau: H_{1} \times H_{1} \to H\times H$, hence $\beta$ is a rational automorphism of $H$. The second part follows as $K_{g}|(G^{0}\times G^{0})$ is the graph of $\alpha_{g}$
\end{proof}

We now use the $\beta_{g}$'s to build a (complex) algebraic group $H$ whose connected component is $H_{2}$.   Let $g_{1},..,g_{n}$ be representatives of the cosets of $G^{0}$ in $G$. Let $g_{i}\cdot g_{j} = h_{ij}\cdot g_{r}$  where $r = r(i,j)$ and $h_{ij} \in G^{0}$. 
Note that the group $(G,\cdot)$ is isomorphic to  the set $G^{0}\times\{g_{1},..,g_{n}\}$ equipped with the group operation $(h,g_{i})*(h',g_{j}) = 
(h\cdot (\alpha_{g_{i}}(h'))\cdot h_{ij},g_{r(i,j)})$, via the map taking $h\cdot g_{i}$ to $(h,g_{i})$. 
Let us now {\em define} the group $H$ to be the set $H_{2}\times \{g_{1},..,g_{n}\}$  with group operation  $(h,g_{i})*(h',g_{j}) =
(h\cdot (\beta_{g_{i}}(h'))\cdot \tau(h_{ij}), g_{r(i,j)})$.   Note that $H$ is definable with parameters in $(\C,+,\cdot)$, hence is definably isomorphic to a complex algebraic group (whose connected component is clearly $H_{2}$). 

Let us now map $G$ to $H$ by the map  $f(h\cdot g_{i}) = (h,g_{i})$, and we note that this is a homomorphism with finite (central) kernel (contained in $G^{0}$). 

At this point we can conclude the proof of Proposition 3.1 in two possible ways.
\newline
(A): We have already said that $H$ can be identified with a complex algebraic group. Identifying $\C$ with $R\times R$, we identify $H$ with an $R$-algebraic group, and we note that $f$ is semialgebraic (hence Nash)

\vspace{2mm}
\noindent
(B):  Alternatively we can analyse $H_{2}$ and the construction of $H$ to see that $H$ is actually a (complex) algebraic group defined over $R$ and that the homomorphism $f:G\to H$ is also defined over $R$, whereby $f:G\to H(R)$ is the required semialgebraic (in fact Nash) homomorphism with finite 
central kernel.

The proof of Proposition 3.1 is complete.

\end{document}